\documentclass[10pt,twoside]{article}
\usepackage{amsfonts}
\usepackage{fancyhdr}
\usepackage{titlesec}
\usepackage{cite}
\usepackage{ifthen}
\usepackage{amssymb}
\usepackage{fancyhdr}
\usepackage{titlesec}
\usepackage[arrow,matrix]{xy}
\usepackage{pifont}
\usepackage{stmaryrd}
\usepackage{setspace}
\usepackage{indentfirst}
\usepackage{amsmath,amssymb,amscd,bbm,amsthm,mathrsfs,dsfont}
\input amssym.def

\titleformat{\section}{\centering\large\bfseries}{\S\arabic{section}}{1em}{}
\newboolean{first}
\setboolean{first}{true}

\textheight 205mm \textwidth 145mm \setlength{\oddsidemargin}{5.6mm}
\setlength{\evensidemargin}{5.6mm} \pagestyle{myheadings}
 \markboth{\textit{ }\qquad
\qquad \qquad \rm  }{{\footnotesize \textit{}}\qquad\quad
\footnotesize \rm }

\newtheorem{theorem}{Theorem}[section]

\newtheorem{cor}{Corollary}[section]
\newtheorem{definition}{Definition}[section]
\begin{document}
\setlength\abovedisplayskip{2pt}
\setlength\abovedisplayshortskip{0pt}
\setlength\belowdisplayskip{2pt}
\setlength\belowdisplayshortskip{0pt}
\title{\bf \LARGE Remarks on a Paper by Leonetti and Siepe
\author {\small \textsc{GAO Hongya}  \quad \textsc{LIU Chao} \quad \textsc{TIAN Hong} \\
{\small College of Mathematics and Computer Science, Hebei
University, Baoding, 071002, P.R.China}}\date{}} \maketitle
\footnote{MR Subject Classification: 35J60, 35D30, 35J25.}
\footnote{Keywords: Integrability, anisotropic elliptic equation,
anisotropic obstacle problem.} \footnote{Supported by NSFC
(10971224) and NSF of Hebei Province (A2011201011).}

\begin{center}
\begin{minipage}{135mm}

{\bf \small Abstract}.\hskip 2mm {\small In 2012, F.Leonetti and
F.Siepe [1] considered solutions to boundary value problems of some
anisotropic elliptic equations of the type
$$
\left\{
\begin{array}{llll}
\sum\limits _{i=1}\limits^{n} D_i (a_i(x,Du(x)))=0, &x\in
\Omega,\\
u(x)=\theta (x), & x\in \partial \Omega.
\end{array}
\right.
$$
Under some suitable conditions, they obtained an integrability
result, which shows that, higher integrability of the boundary datum
$\theta$ forces solutions $u$ to have higher integrability as well.
In the present paper, we consider ${\cal K}_{\psi,
\theta}^{(p_i)}$-obstacle problems of the nonhomogeneous anisotropic
elliptic equations
$$
\sum_{i=1}^n D_i (a_i(x,Du(x)))=\sum_{i=1}^n D_i f^i(x).
$$
Under some controllable growth and monotonicity conditions. We
obtain an integrability result, which can be regarded as a
generalization of the result due to Leonetti and Siepe.}
\end{minipage}
\end{center}

\thispagestyle{fancyplain} \fancyhead{}
\fancyhead[L]{\textit{}\\
}\fancyfoot{}

\vspace{5mm}

\section{Introduction and statement of results}
Let $\Omega $ be a bounded open subset of $\mbox {R}^n$, $n\ge 2$.
For $p_k>1$, $k=1,2,\cdots, n$, we denote by $p_m$ and $\overline p$
the maximum value and the harmonic mean of $p_k$, respectively,
i.e.,
$$
p_m=\max\limits _{k=1,2,\cdots, n} p_k, \quad \overline p: \frac 1
{\overline p}=\frac 1 n \sum_{k=1}^n \frac 1 {p_k}.
$$

For $t>0$, the weak $L^t$-spaces, or Marcinkiewicz spaces,
$L_{weak}^t(\Omega)$, is defined (see [2, Chapter 1, Section 2] or
[3, Chapter 2, Section 5]) by all measurable functions $f$ such that
$$
|\{x\in \Omega: |f(x)|>\tau \}| \le \frac k {\tau ^t}
$$
for some positive constant $k=k(f)$ and every $\tau >0$, where $|E|$
is the $n$-dimensional Lebesgue measure of $E\subset \mbox {R}^n$.
We recall that if $f\in L_{weak}^t (\Omega)$ for some $t>1$, then
$f\in L^s (\Omega)$ for every $1\le s <t$.

The anisotropic Sobolev spaces $W^{1,(p_k)}(\Omega)$ and
$W_0^{1,(p_k)}(\Omega)$ are defined, respectively, by
$$
W^{1,(p_k)}(\Omega) =\left\{ v\in W^{1,1} (\Omega): D_kv\in
L^{p_k}(\Omega) \mbox { for every } k=1,2,\cdots, n\right \}
$$
and
$$
W_0^{1,(p_k)}(\Omega) =\left\{ v\in W_0^{1,1} (\Omega): D_kv\in
L^{p_k}(\Omega) \mbox { for every } k=1,2,\cdots, n\right \},
$$
where $D_kv=\frac {\partial v}{\partial x_k}$, $k=1,2,\cdots,n$.

Let us consider the following divergence elliptic equation
$$
\sum_{i=1}^n D_i (a_i(x,Du(x)))=\sum_{i=1}^n D_if^i(x), \eqno (1.1)
$$
and suppose that the Carath\'eodory functions $a_i(x, \xi) :\Omega
\times \mbox {R}^n \rightarrow \mbox {R}$, $i=1,2,\cdots, n$,
satisfy
$$
|a_i(x,z)| \le c_1\left(h(x)+\sum_{j=1}^n |z_j|^{p_j}
\right)^{1-1/p_i} \eqno(1.2)
$$
for almost every $x\in \Omega$, every $z=(z_1,z_2,\cdots, z_n)\in
\mbox {R}^n$ and any $i=1,2,\cdots, n$. Furthermore, there exists
$\tilde{\nu} \in (0,+\infty)$ such that
$$
\tilde {\nu} \sum_{i=1}^n |z_i-\tilde z_i|^{p_i} \le \sum_{i=1} ^n
(a_i(x,z)-a_i(x,\tilde z)) (z_i-\tilde z_i) \eqno(1.3)
$$
for almost every $x\in \Omega$ and any $z, \tilde z \in \mbox
{R}^n$. The integrability conditions for $f^i(x)$, $i=1,2,\cdots,
n$, in (1.1) and $h(x)\ge 0$ in (1.2) will be given later.

Let $\psi$ be any function in $\Omega$ with values in $\mbox {R}\cup
\{\pm \infty\}$ and $\theta \in W^{1,(p_k)}(\Omega)$. We introduce
$$
{\cal K} _{\psi,\theta}^{(p_k)}(\Omega) =\left\{ v\in
W^{1,(p_k)}(\Omega): v\ge \psi, \mbox { a.e. and } v\in \theta +
W_0^{1,(p_k)} (\Omega) \right\}.
$$
The function $\psi$ is an obstacle and $\theta$ determines the
boundary values.

\begin{definition}
A function $u\in \theta +W_0^{1,(p_k)} (\Omega)$ is called a
solution to the boundary value problem
$$
\left\{
\begin{array}{llll}
\sum\limits_{i=1}\limits^n D_i
(a_i(x,Du(x)))=\sum\limits_{i=1}\limits ^n D_if^i(x), &x\in
\Omega,\\
u(x)=\theta (x), & x\in \partial \Omega,
\end{array}
\right.  \eqno(1.4)
$$
if
$$
\int_\Omega \sum_{i=1}^n a_i(x,Du(x)) D_i\varphi (x) dx =
\int_\Omega \sum_{i=1}^n f^i(x) D_i\varphi (x)dx, \eqno(1.5)
$$
holds true for any $\varphi \in W_0^{1,(p_k)} (\Omega)$.
\end{definition}

\begin{definition}
A solution to the ${\cal K}_{\psi,\theta}^{(p_k)}$-obstacle problem
is a function $u\in {\cal K} _{\psi,\theta}^{(p_k)}(\Omega)$ such
that
$$
\int_\Omega \sum_{i=1}^n a_i(x,Du(x)) (D_iv(x)-D_iu(x)) dx \ge
\int_\Omega \sum_{i=1}^n f^i(x)(D_iv(x)-D_iu(x)) dx
$$
whenever $v\in {\cal K} _{\psi,\theta}^{(p_k)}(\Omega)$.
\end{definition}

In a recent paper [1], F.Leonetti and F.Siepe considered solutions
$u\in \theta+W_0^{1,(p_k)}(\Omega)$ to the boundary value problem
$$
\left\{
\begin{array}{llll}
\sum\limits_{i=1}\limits^n D_i (a_i(x,Du(x)))=0, &x\in
\Omega,  \\
u(x)=\theta (x), & x\in \partial \Omega
\end{array}
\right.\eqno(1.6)
$$
under the conditions
$$
|a_i(x,z)|\le c_2(1+|z_i|)^{p_i-1},\ \ i=1,2,\cdots,n \eqno(1.2)'
$$
and (1.3), and obtained an integrability result, which shows that,
higher integrability of the boundary datum $\theta$ forces solutions
$u$ to have higher integrability as well.

Note that the assumptions (1.2)$'$ and (1.3) are suggested by the
Euler equation of the anisotropic functional
$$
\int_\Omega (|D_1u_1|^{p_1}+|D_2u_2|^{p_2}+\cdots
+|D_nu_n|^{p_n})dx.
$$

Later, Gao, Zhang and Li [4] considered ${\cal
K}_{\psi,\theta}^{(p_i)}$-obstacle problems for the homogeneous
elliptic equations
$$
\sum_{i=1}^n D_i (a_i(x,Du(x)))=0, \ \ x\in \Omega   \eqno(1.7)
$$
under the conditions (1.2)$'$ and (1.3). A similar result was
obtained, which shows that, higher integrability of the datum
$\theta_*=\max \{\theta, \psi\}$ forces solutions $u$ to have higher
integrability as well.

Integrability property is important among the regularity theories of
nonlinear elliptic PDEs and systems, see [5-12]. In the present
paper, we consider ${\cal K}_{\psi, \theta}^{(p_i)}$-obstacle
problems of the nonhomogeneous anisotropic elliptic equations
$$
\sum_{i=1}^n D_i (a_i(x,Du(x)))=\sum_{i=1}^n D_i f^i(x), \ \ x\in
\Omega  \eqno(1.8)
$$
under the conditions (1.2) and (1.3) with suitable functions $h$ and
$f^i$, $i=1,2,\cdots,n$. The main result of this paper is the
following theorem.

\begin{theorem} Let $\theta\in W^{1,(q_k)} (\Omega)$, $q_k\in (p_k, +\infty)$,
$k=1,2,\cdots, n$, $0\le h \in L^{\tau}(\Omega)$ with
$\tau=\max\limits _{k=1,\cdots, n}\frac {q_k}{p_k}$, $\psi \in
[-\infty, +\infty]$ be such that $\theta_* =\max \{\psi, \theta\}
\in \theta + W_0^{1,(q_k)} (\Omega)$. Moreover $\overline p<n$. Then
for any solution $u\in {\cal K}_{\psi, \theta}^{(p_k)} (\Omega)$ to
the ${\cal K}_{\psi, \theta}^{(p_k)}$-obstacle problem, we have
$$
u\in \theta_* +L_{weak}^t (\Omega)
$$
provided that $f^i\in L^{p_i/(p_i-1-bp_i)}(\Omega)$, $i=1,\cdots,n$,
where
$$
t=\frac {\overline p ^*}{1-\frac {b\overline p^*}{\overline p} \frac
{p_m}{p_m-1}} >\overline p^*,  \eqno(1.9)
$$
$b$ is any number verifying
$$
0<b\le \min_{j=1,\cdots, n} \left(1-\frac
{p_j}{q_j}\right)\min_{i=1,\cdots, n} \left( 1-\frac 1 {p_i}\right)
\eqno(1.10)
$$
and
$$
b<\frac {p_m-1}{p_m} \frac {\overline p}{ \overline p^*}.
\eqno(1.11)
$$
\end{theorem}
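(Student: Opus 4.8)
The plan is to run a Stampacchia-type level-set argument on $V:=u-\theta_*$, exploiting the higher integrability of the data to gain a small power of the measure of each super-level set. Write $A_k=\{x\in\Omega:V(x)>k\}$ for $k>0$. First I would check admissibility: since $q_k>p_k$ we have $\theta_*\in{\cal K}^{(p_k)}_{\psi,\theta}(\Omega)$, and because $u\ge\psi$ the competitor $v=u-(V-k)^+$ also lies in ${\cal K}^{(p_k)}_{\psi,\theta}(\Omega)$ (subtracting the nonnegative $(V-k)^+$ cannot push $u$ below $\theta_*\ge\psi$, and it leaves the boundary values unchanged since $V\in W_0^{1,(p_k)}(\Omega)$). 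Testing the obstacle problem inequality with this $v$, and noting $D_i(V-k)^+=(D_iu-D_i\theta_*)\mathbf{1}_{A_k}$, gives
$$\int_{A_k}\sum_i a_i(x,Du)(D_iu-D_i\theta_*)\,dx\le\int_{A_k}\sum_i f^i(D_iu-D_i\theta_*)\,dx.$$
Subtracting $\int_{A_k}\sum_i a_i(x,D\theta_*)(D_iu-D_i\theta_*)$ and invoking the monotonicity (1.3) yields the energy inequality
$$\tilde{\nu}\int_{A_k}\sum_i|D_iV|^{p_i}\,dx\le\int_{A_k}\sum_i|f^i|\,|D_iV|\,dx+\int_{A_k}\sum_i|a_i(x,D\theta_*)|\,|D_iV|\,dx.$$

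Second, and this is the heart of the matter, I would estimate the two right-hand terms by H\"older so as to extract a factor $|A_k|^{b}$. For the datum $f^i$, H\"older in the exponent $p_i'=p_i/(p_i-1)$ followed by a second H\"older comparing $p_i'$ with the assumed exponent $p_i/(p_i-1-bp_i)$ produces exactly $(\int_{A_k}|f^i|^{p_i'})^{1/p_i'}\le\|f^i\|_{L^{p_i/(p_i-1-bp_i)}}|A_k|^{b}$. For the lower-order term I would use the growth bound (1.2), which after raising to the power $p_i'$ collapses to $|a_i(x,D\theta_*)|^{p_i'}\le c_1^{p_i'}(h+\sum_j|D_j\theta_*|^{p_j})$; since $h\in L^{\tau}$ and $|D_j\theta_*|^{p_j}\in L^{q_j/p_j}$, H\"older on $A_k$ yields a factor $|A_k|^{\gamma(1-1/p_i)}$ with $\gamma=\min_j(1-p_j/q_j)$, and condition (1.10) is precisely what forces $\gamma(1-1/p_i)\ge b$ for every $i$. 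Plugging these in and absorbing $\sum_i(\int_{A_k}|D_iV|^{p_i})^{1/p_i}$ by Young's inequality leaves
$$\sum_i\int_{A_k}|D_iV|^{p_i}\,dx\le C\,|A_k|^{\,b\,p_m/(p_m-1)},$$
the surviving exponent $p_m/(p_m-1)=\min_i p_i'$ being obtained after using $|A_k|\le|\Omega|$ to reduce the other powers of $|A_k|$ to the smallest one.

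Third I would convert this gradient bound into decay of $|A_k|$. Applying the anisotropic Sobolev (Troisi) inequality to $(V-k)^+\in W_0^{1,(p_k)}(\Omega)$, legitimate because $\overline p<n$, with $\overline p^{*}=n\overline p/(n-\overline p)$, gives $(\int_\Omega((V-k)^+)^{\overline p^{*}})^{1/\overline p^{*}}\le C\prod_i(\int_{A_k}|D_iV|^{p_i})^{1/(np_i)}$; feeding in the previous estimate together with $\sum_i 1/p_i=n/\overline p$ produces $(\int_\Omega((V-k)^+)^{\overline p^{*}})^{1/\overline p^{*}}\le C|A_k|^{\delta}$ with $\delta=\tfrac{b\,p_m}{(p_m-1)\overline p}$. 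Since $(V-k)^+>k$ on $A_{2k}$, this gives the recursion
$$k^{\overline p^{*}}|A_{2k}|\le C\,|A_k|^{\overline p^{*}\delta},\qquad \overline p^{*}\delta<1,$$
where $\overline p^{*}\delta<1$ is exactly hypothesis (1.11). A standard iteration lemma applied to the nonincreasing map $k\mapsto|A_k|$ then gives $|A_k|\le C\,k^{-t}$ with $t=\overline p^{*}/(1-\overline p^{*}\delta)$, which is the value (1.9); this is the weak-$L^t$ bound for the positive part of $V$. Running the symmetric argument with the admissible competitor $v=u+(\theta_*-u-k)^+$ (raising $u$ is always allowed in an obstacle problem from below) controls $(\theta_*-u)^+$ in the same way, and the two estimates combine to $u-\theta_*\in L^t_{weak}(\Omega)$.

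The step I expect to be the genuine obstacle is the second one: arranging both data terms to yield the common gain $|A_k|^{b}$. The bound on $f^i$ dictates the admissible range of $b$ through the exponent $p_i/(p_i-1-bp_i)$, while the lower-order term forces the balance condition (1.10); tracking which power of $|A_k|$ ultimately survives, so that the clean recursion emerges with the correct exponent $\overline p^{*}\delta$, is where the bookkeeping must be done with care. By comparison the final iteration is routine once (1.11) guarantees $\overline p^{*}\delta<1$.
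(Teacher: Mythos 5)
Your proposal is correct and follows essentially the same route as the paper: test the variational inequality with truncations of $u-\theta_*$, use monotonicity (1.3) and growth (1.2) together with H\"older's inequality and the assumed integrability of $h$, $D\theta_*$ and $f^i$ to gain the factor $|A_k|^b$ (your verification that (1.10) and the hypothesis on $f^i$ yield exactly this common gain matches the paper's choice of the exponents $t_i$ in (2.5)--(2.9)), then conclude via Troisi's anisotropic Sobolev inequality and a level-set iteration. The only cosmetic differences are that you work with the one-sided truncations $(V-k)^+$ and $(\theta_*-u-k)^+$ where the paper uses the two-sided $T_L(u-\theta_*)$, and that you carry out explicitly the Sobolev-plus-iteration endgame which the paper delegates to the argument of [1].
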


The idea of the proof of Theorem 1.1 comes from [1]. Theorem 1.1 can
be regarded as a generalization of [1, Theorem 2.1]. The difficulty
in the proof of Theorem 1.1 is that, under the condition (1.2), we
need to derive that the constant $M$ in [1] is finite. To this aim,
we need to restrict the constant $b$ in Theorem 1.1 to satisfy
(1.10) instead of [1,(2.9)].

For solutions to boundary value problems (1.4), we have

\begin{theorem}
Let $\theta \in W^{1,(q_k)} (\Omega)$, $q_k\in (p_k, +\infty)$,
$k=1,2,\cdots, n$, $0\le h \in L^{\tau}(\Omega)$ with $\tau$ be as
in Theorem 1.1. Moreover $\overline p<n$. Then for any solution
$u\in \theta +W_0^{1,(p_k)} (\Omega)$ to the boundary value problem
(1.4), we have
$$
u\in \theta +L_{weak}^t (\Omega),
$$
provided that $f^i\in L^{p_i/(p_i-1-bp_i)}(\Omega)$, $i=1,\cdots,n$,
where $t$ verifies (1.9)-(1.11).
\end{theorem}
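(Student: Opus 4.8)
The plan is to deduce Theorem 1.2 from Theorem 1.1 by regarding the boundary value problem (1.4) as the $\mathcal{K}_{\psi,\theta}^{(p_k)}$-obstacle problem in the degenerate case where the obstacle is switched off, namely $\psi \equiv -\infty$. With this choice the pointwise constraint $v \ge \psi$ in the definition of $\mathcal{K}_{\psi,\theta}^{(p_k)}(\Omega)$ becomes vacuous, so that
$$
\mathcal{K}_{-\infty,\theta}^{(p_k)}(\Omega) = \theta + W_0^{1,(p_k)}(\Omega),
$$
which is exactly the class in which a solution to (1.4) is sought.

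The one genuine point to verify is that $u \in \theta + W_0^{1,(p_k)}(\Omega)$ solves (1.4) in the sense of Definition 1.1 if and only if it solves the $\mathcal{K}_{-\infty,\theta}^{(p_k)}$-obstacle problem in the sense of Definition 1.2; this is where I expect the only (mild) obstacle to lie. With no obstacle the admissible set is the full affine space $\theta + W_0^{1,(p_k)}(\Omega)$, so for every test function $\varphi \in W_0^{1,(p_k)}(\Omega)$ both competitors $v = u+\varphi$ and $v = u-\varphi$ belong to $\mathcal{K}_{-\infty,\theta}^{(p_k)}(\Omega)$. Inserting $v = u + \varphi$ into the variational inequality of Definition 1.2 gives
$$
\int_\Omega \sum_{i=1}^n a_i(x,Du)D_i\varphi\, dx \ge \int_\Omega \sum_{i=1}^n f^i D_i\varphi\, dx,
$$
whereas inserting $v = u - \varphi$ yields the reverse inequality; together they force the equality (1.5). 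Conversely any solution of (1.5) trivially satisfies the variational inequality (with equality) for every admissible $v$. Hence the two notions of solution coincide.

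It then remains to confirm that the hypotheses of Theorem 1.1 hold for $\psi \equiv -\infty$. Here $\theta_* = \max\{\psi,\theta\} = \theta$, so the compatibility requirement $\theta_* \in \theta + W_0^{1,(q_k)}(\Omega)$ reduces to $\theta \in \theta + W_0^{1,(q_k)}(\Omega)$, which holds trivially since $0 \in W_0^{1,(q_k)}(\Omega)$. Every other assumption --- $\theta \in W^{1,(q_k)}(\Omega)$ with $q_k \in (p_k,+\infty)$, $0 \le h \in L^\tau(\Omega)$ with $\tau = \max_k q_k/p_k$, the condition $\overline p < n$, and $f^i \in L^{p_i/(p_i-1-bp_i)}(\Omega)$ with $b$ and $t$ verifying (1.9)--(1.11) --- is identical to those imposed in Theorem 1.2. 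Applying Theorem 1.1 we conclude
$$
u \in \theta_* + L_{weak}^t(\Omega) = \theta + L_{weak}^t(\Omega),
$$
which is precisely the assertion of Theorem 1.2.
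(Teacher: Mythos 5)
Your proposal is correct and follows exactly the paper's own route: the paper proves Theorem 1.2 in one line by taking $\psi\equiv-\infty$ in Theorem 1.1. Your additional verification that, in the unconstrained case, the variational inequality of Definition 1.2 (tested with $v=u\pm\varphi$) is equivalent to the weak formulation (1.5) of Definition 1.1 is precisely the detail the paper leaves implicit, and it is carried out correctly.
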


\begin{proof}
Take the obstacle function $\psi$ to be minus infinity in Theorem
1.1 we arrive at the desired result.
\end{proof}

When we are in the isotropic case, that is, $q_i=q>p=p_i$ for every
$i=1,2,\cdots,n$, we denote $W^{1,(p_i)}(\Omega)=W^{1,p}(\Omega)$,
$W_0^{1,(p_i)}(\Omega)=W_0^{1,p}(\Omega)$ and ${\cal
K}_{\psi,\theta} ^{(p_i)}(\Omega)= {\cal K}_{\psi,\theta}
^{p}(\Omega)$.

When $n>q_i=q>p=p_i$ for every $i=1,\cdots,n$, then
$$
\begin{array}{llll}
\displaystyle \min_{j=1,\cdots, n} \left(1-\frac
{p_j}{q_j}\right)\min_{i=1,\cdots, n} \left( 1-\frac 1
{p_i}\right)&= \left(1-\frac p q \right)\left( 1-\frac 1 p\right)\\
&\displaystyle <\frac {(p-1)(n-p)}{np}=\frac {p_m-1}{p_m} \frac
{\overline p}{\overline p^*};
\end{array}
$$
then we take
$$
b=\min\limits_{j=1,\cdots, n} \left(1-\frac
{p_j}{q_j}\right)\min\limits_{i=1,\cdots, n} \left( 1-\frac 1
{p_i}\right)
$$
and we get
$$
t=\frac {nq}{n-q}.
$$
Thus we have the following two corollaries.

\begin{cor} Let $\theta\in W^{1,q} (\Omega)$, $q\in (p, n)$, $0\le h \in
L^{q/p}(\Omega)$, $\psi \in [-\infty, +\infty]$ be such that
$\theta_* =\max \{\psi, \theta\} \in \theta + W_0^{1,q} (\Omega)$.
Then for any solution $u\in {\cal K}_{\psi, \theta}^{p} (\Omega)$ to
the ${\cal K}_{\psi, \theta}^{p}$-obstacle problem, we have
$$
u\in \theta_* +L_{weak}^t (\Omega)
$$
provided that $f^i\in L^{q/(p-1)}(\Omega)$, $i=1,\cdots,n$, where
$$
t=\frac {nq}{n-q}.
$$
\end{cor}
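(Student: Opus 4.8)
The plan is to derive Corollary 1.1 as the isotropic specialization of Theorem 1.1, setting $p_i=p$ and $q_i=q$ for every $i=1,\dots,n$. First I would record the isotropic values of the structural constants: the harmonic mean is $\overline p=p$, the maximum is $p_m=p$, and the Sobolev conjugate is $\overline p^*=np/(n-p)$, which is well defined precisely because $p<n$ (guaranteed by $p<q<n$). The exponent in the hypothesis on $h$ becomes $\tau=\max_k(q_k/p_k)=q/p$, so $h\in L^{q/p}(\Omega)$ is exactly $h\in L^\tau(\Omega)$. In the same way the assumptions $\theta\in W^{1,q}(\Omega)$ with $q\in(p,\infty)$, the constraint $\theta_*\in\theta+W_0^{1,q}(\Omega)$, and $\overline p=p<n$ reduce verbatim to the hypotheses of Theorem 1.1.

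Next I would pin down an admissible $b$. Taking $b=(1-p/q)(1-1/p)$ makes (1.10) an equality, and $b>0$ since $q>p$ and $p>1$. The strict inequality (1.11) is precisely the estimate
$$\left(1-\frac{p}{q}\right)\left(1-\frac{1}{p}\right)<\frac{(p-1)(n-p)}{np}=\frac{p_m-1}{p_m}\frac{\overline p}{\overline p^*}$$
already computed in the text immediately preceding the corollary, so it holds for this $b$. The remaining hypothesis on $f^i$ asks me to simplify the exponent $p_i/(p_i-1-bp_i)$. A short computation gives $bp=(q-p)(p-1)/q$, whence $p-1-bp=(p-1)p/q$ and
$$\frac{p}{p-1-bp}=\frac{q}{p-1},$$
so the condition $f^i\in L^{q/(p-1)}(\Omega)$ in the corollary is exactly the specialization of $f^i\in L^{p_i/(p_i-1-bp_i)}(\Omega)$.

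Finally I would evaluate $t$ from (1.9). Substituting the isotropic constants, the denominator is
$$1-\frac{b\overline p^*}{\overline p}\frac{p_m}{p_m-1}=1-\frac{q-p}{q}\frac{n}{n-p}=\frac{p(n-q)}{q(n-p)},$$
so that $t=\overline p^*\big/\frac{p(n-q)}{q(n-p)}=\frac{nq}{n-q}$, as asserted. Applying Theorem 1.1 then gives $u\in\theta_*+L_{weak}^t(\Omega)$. There is no genuine obstacle here, since the entire content of the corollary is bookkeeping; the only points requiring care are the two normalizing identities — the collapse of the $f^i$ exponent to $q/(p-1)$ and the evaluation of the denominator in (1.9) — both of which follow from elementary algebra in the isotropic case.
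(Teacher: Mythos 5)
Your proposal is correct and follows exactly the paper's own route: the text preceding the corollaries specializes Theorem 1.1 to the isotropic case by choosing $b=\left(1-\frac{p}{q}\right)\left(1-\frac{1}{p}\right)$, checking it satisfies (1.10)--(1.11) via the inequality $\left(1-\frac{p}{q}\right)\left(1-\frac{1}{p}\right)<\frac{(p-1)(n-p)}{np}$, and computing $t=\frac{nq}{n-q}$. Your explicit verification that the exponent $p/(p-1-bp)$ collapses to $q/(p-1)$ is a detail the paper leaves implicit, but it is the same argument, just written out more carefully.
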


\begin{cor}
Let $\theta \in W^{1,q} (\Omega)$, $q\in (p,n)$,
 $0\le h \in L^{q/p}(\Omega)$. Then for any solution $u\in \theta +W_0^{1,p}(\Omega)$ to the boundary
value problem (1.4), we have
$$
u\in \theta +L_{weak}^t (\Omega),
$$
provided that $f^i\in L^{q/(p-1)}(\Omega)$, $i=1,\cdots,n$, where
$$
t=\frac {nq}{n-q}.
$$
\end{cor}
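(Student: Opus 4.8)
The plan is to obtain Corollary 1.2 from Corollary 1.1 by exactly the device used to pass from Theorem 1.1 to Theorem 1.2, namely by setting the obstacle $\psi$ identically equal to $-\infty$. First I would observe that with $\psi\equiv-\infty$ the pointwise constraint $v\ge\psi$ a.e.\ is vacuous, so that ${\cal K}_{-\infty,\theta}^{p}(\Omega)$ coincides with the affine space $\theta+W_0^{1,p}(\Omega)$; moreover $\theta_*=\max\{\psi,\theta\}=\theta$, and the hypothesis $\theta_*\in\theta+W_0^{1,q}(\Omega)$ reduces to the trivially true statement $\theta\in\theta+W_0^{1,q}(\Omega)$, since $0\in W_0^{1,q}(\Omega)$. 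Thus all hypotheses of Corollary 1.1 are automatically satisfied once $\theta\in W^{1,q}(\Omega)$, $q\in(p,n)$, $0\le h\in L^{q/p}(\Omega)$ and $f^i\in L^{q/(p-1)}(\Omega)$ are assumed, which are precisely the hypotheses of Corollary 1.2.

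Next I would verify that under this choice the variational inequality of Definition 1.2 collapses to the weak formulation (1.5) of the boundary value problem (1.4). Since the admissible class is now the affine subspace $\theta+W_0^{1,p}(\Omega)$, both $v=u+\varphi$ and $v=u-\varphi$ are admissible competitors for any $\varphi\in W_0^{1,p}(\Omega)$. Inserting each into the obstacle inequality and adding the two resulting inequalities forces the equality, so that a solution of the ${\cal K}_{-\infty,\theta}^{p}$-obstacle problem is exactly a solution of the boundary value problem (1.4). Consequently the conclusion furnished by Corollary 1.1 reads $u\in\theta_*+L_{weak}^t(\Omega)=\theta+L_{weak}^t(\Omega)$ with $t=nq/(n-q)$, which is precisely the assertion of Corollary 1.2.

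There is essentially no hard step here, as the statement is a direct specialization. The only point deserving a word of justification is the passage from the one-sided obstacle inequality to the two-sided equality (1.5), which is the standard test-function symmetrization described above and is valid precisely because removing the obstacle turns the admissible set into an affine subspace closed under the reflection $\varphi\mapsto-\varphi$. Everything else is pure bookkeeping, identical in form to the reduction of Theorem 1.2 from Theorem 1.1, and no new estimate beyond those already granted by Corollary 1.1 is required.
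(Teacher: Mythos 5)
Your proposal is correct and matches the paper's approach in substance: the paper obtains this corollary by specializing Theorem 1.2 (itself Theorem 1.1 with $\psi\equiv-\infty$) to the isotropic case, while you compose the same two reductions in the opposite order, deducing it from Corollary 1.1 by removing the obstacle. Your explicit verification that the ${\cal K}_{-\infty,\theta}^{p}$-obstacle problem is equivalent to the weak formulation (1.5) is a point the paper leaves implicit, but the argument is otherwise the same direct specialization.
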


\section{Proof of Theorem 1.1}
\begin{proof} For $L>0$ and a function $w$, let $T_L(w)$ be the truncation of $w$
at level $L$, that is,
$$
T_L(w) =\left \{
\begin{array}{llll}
w, & |w| \le L,\\
\mbox {sgn} (w)L, & |w| >L.
\end{array}
\right.
$$
Let $u\in {\cal K} _{\psi,\theta}^{(p_k)}(\Omega)$ be a solution to
the ${\cal K}_{\psi,\theta}^{(p_k)}$-obstacle problem. If we take
$$
v=\theta_* +T_L (u-\theta_*)=\left\{
\begin{array}{llll}
\theta_*-L, & \mbox { for } u-\theta_* <-L,\\
u, & \mbox { for } -L\le u-\theta_* \le L,\\
\theta_*+L, &\mbox { for } u-\theta_* >L,
\end{array}
\right.
$$
then $v\in {\cal K}_{\psi, \theta} ^{(p_k)} (\Omega)$. Indeed, it is
obvious that $v\in W^{1,(p_k)} (\Omega)$; for the second and the
third cases of the above definition for $v$, we obviously have $v\ge
\psi$, and for the first case, $u-\theta_* <-L$, we have
$\theta_*>u+L \ge \psi +L$, this implies $v=\theta_*-L\ge \psi$; and
since $u\in \theta +W_0^{1,(p_k)}(\Omega)$ and $u\ge \psi$, then
$\theta_* =\max \{\psi, \theta\} =\theta =u$ on $\partial \Omega$,
thus $v=u=\theta $ on $\partial \Omega$, this implies $v\in \theta
+W_0^{1,(p_k)} (\Omega)$.

Definition 1.2 together with the definition of $v$ yields
$$
\begin{array}{llll}
&\displaystyle \int_{\{|u-\theta_*| >L\}} \sum_{i=1} ^n f^i\cdot
(D_i\theta_*-D_iu) dx\\
=&\displaystyle \int_{\Omega} \sum_{i=1} ^n f^i\cdot
(D_iv-D_iu) dx\\
\le &\displaystyle \int_{\Omega} \sum_{i=1} ^n
a_i(x,Du)\cdot (D_iv-D_iu) dx \\
=&\displaystyle \int_{\{|u-\theta_*| >L\}}\sum_{i=1} ^n
a_i(x,Du)\cdot (D_i\theta _* (x)-D_iu) dx.
\end{array} \eqno(2.1)
$$
Monotonicity (1.3) allows us to write
$$
\begin{array}{llll}
&\displaystyle \tilde {\nu} \sum_{i=1} ^n \int _{\{|u-\theta_*|
>L\}} |D_iu-D_i\theta _*| ^{p_i}dx\\
\le &\displaystyle \int_{\{|u-\theta_*| >L\}} \sum_{i=1}^n
(a_i(x,Du)-a_i(x,D\theta_*)) (D_iu-D_i\theta_*) dx,
\end{array}
$$
which together with (2.1) implies
$$
\begin{array}{llll}
&\displaystyle \tilde {\nu} \sum_{i=1} ^n \int _{\{|u-\theta_*|
>L\}} |D_iu-D_i\theta _*| ^{p_i}dx\\
\le &\displaystyle -\int _{\{|u-\theta_*|
>L\}}\sum_{i=1} ^n a_i(x,D\theta_*) (D_iu -D_i\theta_*) dx\\
&\displaystyle -\int_{\{|u-\theta_*| >L\}} \sum_{i=1} ^n f^i\cdot
(D_i\theta_* -D_iu) dx.
\end{array} \eqno(2.2)
$$
We now use anisotropic growth (1.2) and the H\"older inequality in
(2.2), obtaining that
$$
\begin{array}{llll}
&\displaystyle \tilde \nu \sum_{i=1}^n \int _{\{|u-\theta_*|
>L\}} |D_iu-D_i\theta_*|^{p_i}dx \\
\le &\displaystyle -\sum_{i=1} ^n  \int _{\{|u-\theta_*|
>L\}} a_i(x,D\theta_*) (D_iu-D_i \theta _*) dx \\
&\displaystyle -\sum_{i=1} ^n \int_{\{|u-\theta_*| >L\}} f^i\cdot
(D_i\theta_* -D_iu) dx\\
\le &\displaystyle c_1 \sum_{i=1} ^n  \int _{\{|u-\theta_*|
>L\}} \left( h+\sum_{j=1}^n|D_j\theta
_*|^{p_j}\right)^{1-1/p_i} |D_iu-D_i\theta_*| dx
\end{array}
$$
$$
\begin{array}{llll}
&\displaystyle +\sum_{i=1} ^n \int_{\{|u-\theta_*| >L\}} |f^i|
|D_i\theta_* -D_iu| dx\\
\le & \displaystyle  c_1\sum_{i=1}^n  \left( \int _{\{|u-\theta_*|
>L\}} h+\sum_{j=1}^n |D_j\theta _*|^{p_j}
dx\right) ^{1-1/p_i} \left(\int _{\{|u-\theta_*|
>L\}}|D_iu-D_i\theta_*|^{p_i}dx \right)^{1/p_i}\\
& \displaystyle +\sum_{i=1}^n \left(\int _{\{|u-\theta_*|
>L\}}|f^i|^{p_i/(p_i-1)}dx \right)^{1-1/p_i}
\left(\int _{\{|u-\theta_*|
>L\}}|D_iu-D_i\theta_*|^{p_i}dx \right)^{1/p_i}.
\end{array}
\eqno(2.3)
$$
Let $t_i$ be such that
$$
p_i<t_i \le q_i
$$
for every $i=1,\cdots, n$; $t_i$ will be chosen later. We use
H\"older inequality as follows
$$
\begin{array}{llll}
&\displaystyle \left( \int _{\{|u-\theta_*|
>L\}} h+\sum_{j=1}^n|D_j\theta _*|^{p_j}dx \right) ^{1-1/p_i}\\
\le &\displaystyle \left( \int _{\{|u-\theta_*|
>L\}} \left( h+\sum_{j=1}^n|D_j\theta _*|^{p_j}\right) ^{t_i/p_i}
dx\right)^{(p_i-1)/t_i} |\{ |u-\theta_*| >L \}|^{(t_i-p_i)(p_i-1)
/t_ip_i}.
\end{array} \eqno(2.4)
$$
We would like to choose $t_i$ such that the exponent
$$
b=\frac {(t_i-p_i)(p_i-1)}{t_ip_i}  \eqno(2.5)
$$
does not depend on $i$, and simultaneously
$$
M_1=\max _{i=1,\cdots, n} \left( \int _{\Omega} \left(
h+\sum_{j=1}^n|D_j\theta _*|^{p_j}\right) ^{t_i/p_i}
dx\right)^{(p_i-1)/t_i}  \eqno(2.6)
$$
is finite. To the first aim, we solve (2.5) with respect to $t_i$,
obtaining that
$$
t_i =\frac {p_i (p_i-1)}{p_i-1-bp_i}. \eqno(2.7)
$$
Since we need $t_i > 0$, we require that $p_i-1-bp_i>0$, that is
$b<1-\frac 1 {p_i}<1$; moreover, the limitation $p_i < t_i$ is
equivalent to $b> 0$. Finally, since we required $t_i\le q_i$, we
need
$$
0<b\le \left(1-\frac {p_i}{q_i}\right)  \left(1-\frac 1
{p_i}\right)<1-\frac 1 {p_i}<1, \ \mbox { for every } i=1,\cdots,n.
$$
To the second aim, since we need
$$
h+\sum_{j=1}^n|D_j\theta _*|^{p_j} \in L^{t_i/p_i}(\Omega),
$$
we then require
$$
\min _{j=1,\cdots, n} \frac {q_j}{p_j} \ge \frac {t_i}{p_i}, \ \mbox
{ for every } i=1,\cdots,n. \eqno(2.8)
$$
Note that (2.8) is equivalent to
$$
\min_{j=1,\cdots,n} \frac {q_j}{p_j} \ge \max _{i=1,\cdots,n} \frac
{t_i}{p_i}. \eqno(2.8)'
$$
We now show (2.8) occurs if
$$
b=\frac {(t_i-p_i)(p_i-1)}{t_ip_i} \le \min_{j=1,\cdots,n}
\left(1-\frac {p_j}{q_j}\right) \min_{i=1,\cdots, n}\left(1-\frac 1
{p_i}\right)  \eqno(2.9)
$$
holds true. In fact, from (2.9), for every $i=1,\cdots,n$, one has
$$
\left(1-\frac {p_i}{t_i}\right)\min_{i=1,\cdots,n} \left(1-\frac 1
{p_i}\right) \le b \le \min_{j=1,\cdots,n} \left(1-\frac
{p_j}{q_j}\right) \min_{i=1,\cdots, n}\left(1-\frac 1 {p_i}\right).
$$
This implies
$$
1-\frac {p_i}{t_i}\le \min_{j=1,\cdots,n} \left(1-\frac
{p_j}{q_j}\right)=1-\max _{j=1,\cdots,n}\frac {p_j}{q_j} , \ \ \mbox
{ for every } i=1,\cdots,n
$$
which is equivalent to (2.8).

Thus, for every $b$ such that (1.10) holds true, we can define $t_i$
as in (2.7) obtaining that $p_i < t_i \le q_i$, $b$ in (2.5) is
independent of $i$, and $M_1$ in (2.6) is finite.

Under the above assumptions on exponents, (2.4) becomes
$$
\begin{array}{llll}
&\displaystyle \left( \int _{\{|u-\theta_*|
>L\}} h+\sum_{j=1}^n|D_j\theta _*|^{p_j}dx \right) ^{1-1/p_i}\\
\le &\displaystyle \left( \int _{\{|u-\theta_*|
>L\}} \left( h+\sum_{j=1}^n|D_j\theta _*|^{p_j}\right) ^{t_i/p_i}
dx\right)^{(p_i-1)/t_i} |\{ |u-\theta_*| >L \}| ^{b}\\
\le&\displaystyle M_1 |\{|u-\theta_*| >L \}| ^{b},
\end{array}  \eqno(2.10)
$$
where $M_1$ be as in (2.6).

We use H\"older inequality again, obtaining that
$$
\begin{array}{llll}
&\displaystyle \left(\int _{\{|u-\theta_*|
>L\}}|f^i|^{p_i/(p_i-1)}dx \right)^{1-1/p_i}\\
\le &\displaystyle \left(\int _{\{|u-\theta_*|
>L\}}|f^i|^{t_i/(p_i-1)}dx \right)^{p_i(p_i-1)/t_ip_i}|\{|u-\theta_*| >L \}|
^{b}\\
\le &\displaystyle M_2 |\{|u-\theta_*| >L \}| ^{b},
\end{array}  \eqno(2.11)
$$
where
$$
M_2 =\max _{i=1,\cdots,n}\left(\int _{\Omega}|f^i|^{t_i/(p_i-1)}dx
\right)^{p_i(p_i-1)/t_ip_i}.
$$
From the assumption $f^i\in L^{p_i/(p_i-1-bp_i)}(\Omega)$,
$i=1,2,\cdots,n$ and (2.5), one has
$$
\frac {t_i}{p_i-1} =\frac {p_i}{p_i-1-\frac {(t_i-p_i)(p_i-1)}{t_i}}
=\frac {p_i}{p_i-1-bp_i}
$$
for every $i=1,2,\cdots,n$. This implies $M_2<\infty$.

If we insert (2.10) and (2.11) into (2.3), we easily get
$$
\begin{array}{llll}
&\displaystyle \tilde \nu \sum_{i=1}^n \int _{\{|u-\theta_*|
>L\}} |D_iu-D_i\theta_*|^{p_i}dx \\
\le &\displaystyle (c_1M_1+M_2)|\{|u-\theta_*| >L \}| ^{b}
\sum_{i=1}^n\left(\int _{\{|u-\theta_*|
>L\}}|D_iu-D_i\theta_*|^{p_i}dx \right)^{1/p_i},
\end{array} \eqno(2.12)
$$
which can be considered as an inequality of the following type
$$
\sum_{i=1}^n d_i \le \lambda \sum_{i=1}^n (d_i)^{1/p_i},
$$
with
$$
d_i=\int _{\{|u-\theta_*|
>L\}} |D_iu-D_i\theta_*|^{p_i}dx
$$
and
$$
\lambda =\frac {(c_1M_1+M_2)|\{|u-\theta_*| >L \}|
^{b}}{\tilde \nu}.
$$
Following the idea of [1], one can derive that
$$
u-\theta_*\in L_{weak}^{t}(\Omega)
$$
with $t$ verifies (1.9)-(1.11). This completes the proof of Theorem
1.1.
\end{proof}

\vspace{4mm}

\rm \footnotesize \baselineskip 9pt

\end{document}